\documentclass[a4paper,10pt]{article}

\usepackage[pdftex]{graphicx}
\usepackage[section] {placeins}
\usepackage{enumerate}
\usepackage[normalem]{ulem}
\usepackage{amsmath}
\usepackage{amsthm}
\usepackage{latexsym}
\usepackage{amsfonts}
\usepackage{amssymb}
\usepackage{mathtools}
\usepackage{longtable}

\usepackage{tikz-cd}
\usetikzlibrary{graphs}
\usetikzlibrary{arrows.meta}
\usetikzlibrary{shapes.geometric}

\usepackage{authblk}

\usepackage{latexsym,amssymb,amsfonts, amsthm, amsmath}
\usepackage[english]{babel}
\usepackage{bm}
\usepackage{mathrsfs}
\usepackage{algpseudocode}
\usepackage{algorithm}

\algnewcommand{\IIf}[1]{\State\algorithmicif\ #1\ \algorithmicthen}
\algnewcommand{\EndIIf}{\unskip\ \algorithmicend\ \algorithmicif}

\newtheorem{thm}{Theorem}[section]
\newtheorem{lem}[thm]{Lemma}
\newtheorem{cor}[thm]{Corollary}
\newtheorem{prop}[thm]{Proposition}

\theoremstyle{definition}
\newtheorem{defn}[thm]{Definition}

\newtheorem{rem}[thm]{Remark}

\DeclareMathOperator\supp{supp}

\usepackage[margin=3.54cm]{geometry}

\makeatletter
\newcommand{\subjclass}[2][1991]{%
\let\@oldtitle\@title%
\gdef\@title{\@oldtitle\footnotetext{#1 \emph{Mathematics subject classification.} #2}}%
}
\newcommand{\keywords}[1]{%
\let\@@oldtitle\@title%
\gdef\@title{\@@oldtitle\footnotetext{\emph{Key words and phrases.} #1.}}%
}
\makeatother

\title{Bounds on the Higher Degree \\ Erd\H{o}s-Ginzburg-Ziv Constants over $\mathbb{F}_q^n$}

\date{}
\author[1]{Simone Costa}
\author[2]{Stefano Della Fiore}

\affil[1]{DICATAM, Sez.~Matematica, Universit\`a degli Studi di Brescia,  Via Branze~43, I~25123 Brescia, Italy}
\affil[2]{DI, Universit\`a degli Studi di Salerno, Via Giovanni Paolo II 132, 84084 Fisciano, Italy}
\subjclass[2010]{05D40, 11B75}
\keywords{Higher Degree Erd\H{o}s-Ginzburg-Ziv Constants, Probabilistic Method, Lov\'asz Local Lemma, Slice Rank}
\begin{document}
\maketitle
\begin{abstract}
\noindent
The classical Erd\H{o}s-Ginzburg-Ziv constant of a group $G$ denotes the smallest positive integer $\ell$ such that any sequence $S$ of length at least $\ell$ contains a zero-sum subsequence of length $\exp(G)$.

In the recent paper, \cite{CS}, Caro and Schmitt generalized this concept, using the $m$-th degree symmetric polynomial $e_m(S)$ instead of the sum of the elements of $S$ and considering subsequences of a given length $t$. In particular, they defined the higher degree Erd\H{o}s-Ginzburg-Ziv constants $EGZ(t,R,m)$ of a finite commutative ring $R$ and presented several lower and upper bounds to these constants.

This paper aims to provide lower and upper bounds for $EGZ(t,R,m)$ in case $R=\mathbb{F}_q^{n}$. The lower bounds here presented have been obtained, respectively, using Lov\'asz Local Lemma and the Expurgation method and, for sufficiently large $n$, they beat the lower bound provided by Caro and Schmitt for the same kind of rings.  Finally,  we prove closed form upper bounds derived from the Ellenberg–Gijswijt and Sauermann results for the cap-set problem assuming that $q = p^k$,  $t = p$,  and $m=p-1$.  Moreover,  using the Slice Rank method we derive a convex optimization problem that provides the best bounds for $q = 3^k$,   $t = 3$,  $m=2$ and $k=2, 3,4,5$.
\end{abstract}
\section{Introduction}\label{sec:intro}
One significant subfield of additive group theory and combinatorial number theory is the zero-sum theory that studies the sums behavior of suitable sequences of elements in an abelian finite group $G$ (see, for instance, the surveys \cite{C, GG}). In this context, a typical kind of problem considers the existence of constants $\ell$ such that any sequence of elements of $G$ whose length is bigger than $\ell$ satisfies an additive property $\mathcal{P}$. Among these constants, an important role is taken by the classical Erd\H{o}s-Ginzburg-Ziv constant of a group $G$ that denotes the smallest positive integer $\ell$ such that any sequence of length $|S|\geq \ell$ contains a zero-sum subsequence of length $\exp(G)$. This constant has been well studied in the literature, we refer to the survey paper \cite{GG}. Here we recall that in \cite{EGZ}, Erd\H{o}s, Ginzburg and Ziv completely determined its value over cyclic groups and that in \cite{FS},  \cite{N} and \cite{Sauer}, respectively Fox, Sauermann and Naslund, derived nontrivial upper bounds on groups of type $\mathbb{F}_p^n$ (they assumed a slightly different definition of Erd\H{o}s-Ginzburg-Ziv constant).

In the recent paper \cite{CS}, Caro and Schmitt generalized this concept, using the $m$-th degree symmetric polynomial $e_m(g_1,\dots,g_{t})=\sum_{1\leq i_1<\cdots<i_m\leq t} \prod_{j=1}^m g_{i_j}$ instead of the sum of the elements of $S$ and considering subsequences of a given length $t$ (see also \cite{ABV, BL1, BL2} and \cite{Bi} that considered some related problems). In particular, they defined the higher degree Erd\H{o}s-Ginzburg-Ziv constants $EGZ(t,R,m)$ as follows.
For a finite commutative ring $R$, $EGZ(t,R,m)$ is the smallest positive integer $\ell$ such that every sequence $S$ over $R$ of length $|S|\geq \ell$ contains a subsequence $S'$ of length $t$ for which $e_{m}(S')$ evaluates to the zero-element in $R$. If such $\ell$ does not exists, $EGZ(t,R,m)$ is set to $\infty$.

They also present several lower and upper bounds to these constants solving the case where $R$ is $\mathbb{Z}_2$ and the case where $R$ is $\mathbb{Z}_{p^{s}}$ if $t$ and $m$ are powers of the same prime. For a generic finite commutative ring $R$,  their best lower bound is expressed in term of the generalized Davenport constant $D(R,m)$ of the ring $R$ (see Caro, Girard and Schmitt, \cite{CGS}) that is the smallest integer $\ell$ such that any sequence $S$ over $R$ of length $|S|\geq \ell$ contains a subsequence $S'$ of length $|S'|\geq m$ for which $e_m(S')$ equals the zero element of $R$. Indeed they prove that \begin{equation}\label{LowerCS}EGZ(t,R,m)\geq t+D(R,m)-m.\end{equation}

This paper aims to determine lower and upper bounds for $EGZ(t,R,m)$ in case $R=\mathbb{F}_q^{n}$ (viewed as a commutative ring) for some prime power $q$ (in the following we will always use the letter $q$ for a prime power and $p$ for a prime). The article is organized as follows. In Section 2 we will present two lower bounds obtained, respectively, using Lov\'asz Local Lemma and the Expurgation method. 
Then, in Section 3, we will show that, for sufficiently large $n$, our bounds improve the ones given by Caro and Schmitt in the same context.
Finally, in Section 4,  we prove closed form upper bounds to $EGZ(p,R,p-1)$,  derived from the Ellenberg–Gijswijt \cite{ellenberg-gjswijt-2017} and Sauermann \cite{Sauer} bounds for the cap-set problem,  in case $R=\mathbb{F}_q^{n}$ and $q=p^k$.  Moreover,  we will apply Tao's Slice Rank method to provide an upper bound to $EGZ(3,\mathbb{F}_q^n,2)$ and we derive a convex optimization problem that we can solve numerically providing better bounds for $q = 3^k$ and $k=2, 3,4,5$.
\section{Lower bounds}\label{sec:LB}
In this section, we will present two kinds of probabilistic lower bounds on the Erd\H{o}s-Ginzburg-Ziv constants of rings of type $\mathbb{F}_q^n$. Both those bounds exploit the following upper bound on the probability that a given $t$-sequence $S$ of elements in (vectors of) $\mathbb{F}_q^n$ is such that $e_m(S)=0$.  To provide such an upper bound, we exploit the following famous lemma.

\begin{lem}[Schwartz-Zippel Lemma]\label{lem:SZ}
Let $P \in \mathbb{F}[x_1, x_2, \ldots, x_t]$ be a non-zero polynomial with degree $d$.  Consider a finite subset $A \subseteq \mathbb{F}$.  If we pick uniformly at random $r_1, r_2, \ldots, r_t$ from $A$,  then
$$
	\mathbb{P}[P(r_1,  r_2,  \ldots,  r_t) = 0] \leq \frac{d}{|A|}\,.
$$
\end{lem}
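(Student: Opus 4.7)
The plan is to proceed by induction on the number of variables $t$. The base case $t=1$ is essentially the classical statement that a non-zero univariate polynomial of degree $d$ over a field has at most $d$ roots; since the random evaluation point is drawn uniformly from $A$, the probability of hitting one of these at most $d$ roots is bounded by $d/|A|$.

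For the inductive step, I would isolate one variable, say $x_1$, and write
\[
P(x_1, x_2, \ldots, x_t) \;=\; \sum_{i=0}^{k} x_1^{\,i}\, P_i(x_2, \ldots, x_t),
\]
where $k$ is the largest power of $x_1$ appearing in $P$ and $P_k$ is a non-zero polynomial in $x_2, \ldots, x_t$ of degree at most $d-k$. The inductive hypothesis applied to $P_k$ yields $\mathbb{P}[P_k(r_2, \ldots, r_t) = 0] \leq (d-k)/|A|$. On the complementary event $\{P_k(r_2, \ldots, r_t) \neq 0\}$, the polynomial $P(x_1, r_2, \ldots, r_t)$ is a non-zero univariate polynomial in $x_1$ of degree exactly $k$, so by the base case applied conditionally, $\mathbb{P}[P(r_1, r_2, \ldots, r_t) = 0 \mid P_k(r_2, \ldots, r_t) \neq 0] \leq k/|A|$.

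Combining these via the law of total probability (or equivalently a union bound over the events ``$P_k$ vanishes'' and ``$P_k$ does not vanish but $P$ does'') gives
\[
\mathbb{P}[P(r_1, \ldots, r_t) = 0] \;\leq\; \frac{d-k}{|A|} + \frac{k}{|A|} \;=\; \frac{d}{|A|},
\]
which closes the induction. I do not expect a genuine obstacle here: the main subtlety is simply making sure that $P_k$ is indeed non-zero (so that the inductive hypothesis applies to it) and that, conditionally on $P_k \neq 0$, the specialization of $P$ in $x_1$ retains its degree $k$ and hence is non-zero as a univariate polynomial. Both points follow directly from the choice of $k$ as the largest index with $P_k \not\equiv 0$.
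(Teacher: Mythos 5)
Your proof is correct: it is the standard induction-on-the-number-of-variables argument for the Schwartz--Zippel lemma, and the two points you flag (that $P_k\not\equiv 0$ by the choice of $k$, and that on the event $\{P_k(r_2,\ldots,r_t)\neq 0\}$ the specialization $P(x_1,r_2,\ldots,r_t)$ is a non-zero univariate polynomial of degree $k$) are exactly the ones that need checking; the conditional application of the base case also uses the independence of $r_1$ from $r_2,\ldots,r_t$, which is implicit in the uniform sampling and worth stating. Note that the paper itself gives no proof of this lemma --- it is quoted as a classical result (Schwartz, Zippel) and used as a black box --- so there is no authorial argument to compare against; your write-up supplies the standard one.
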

Using Lemma \ref{lem:SZ},  we easily obtain the follow proposition.
\begin{prop}\label{bound}
Let us choose, uniformly at random, a sequence $S = (g_1, g_2, $ $\ldots, g_t)$ of $t\geq m$ vectors of $\mathbb{F}_q^n$. Then
$$\mathbb{P}[e_m(S)=0]\leq \left(\frac{m}{q}\right)^{n}.$$
\end{prop}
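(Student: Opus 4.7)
The plan is to reduce the statement to a coordinate-wise application of the Schwartz--Zippel Lemma, exploiting the product structure of the ring $\mathbb{F}_q^n$.

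First I would unpack what $e_m(S) = 0$ means in the ring $R = \mathbb{F}_q^n$. Since addition and multiplication in $R$ are performed componentwise, writing $g_i = (g_i^{(1)}, \ldots, g_i^{(n)})$ gives
\[
e_m(g_1, \ldots, g_t) \;=\; \Bigl( e_m\bigl(g_1^{(1)}, \ldots, g_t^{(1)}\bigr),\; \ldots,\; e_m\bigl(g_1^{(n)}, \ldots, g_t^{(n)}\bigr) \Bigr),
\]
so the event $\{e_m(S) = 0\}$ is the intersection of the $n$ coordinate-wise events $E_k = \{e_m(g_1^{(k)}, \ldots, g_t^{(k)}) = 0\}$, for $k=1,\dots,n$.

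Next I would exploit independence. Because each $g_i$ is drawn uniformly from $\mathbb{F}_q^n$, the scalar entries $\{g_i^{(k)} : 1 \le i \le t,\ 1 \le k \le n\}$ are mutually independent and uniform on $\mathbb{F}_q$. In particular, the $n$ events $E_1, \ldots, E_n$ depend on disjoint collections of these independent entries, so
\[
\mathbb{P}[e_m(S) = 0] \;=\; \prod_{k=1}^n \mathbb{P}[E_k].
\]

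Finally I would bound each factor using Lemma \ref{lem:SZ}. The polynomial $e_m(x_1,\ldots,x_t) \in \mathbb{F}_q[x_1,\ldots,x_t]$ has degree exactly $m$ and, since $t \geq m$, it is a non-zero polynomial (for instance the monomial $x_1 x_2 \cdots x_m$ appears with coefficient $1$). Applying the Schwartz--Zippel Lemma with $A = \mathbb{F}_q$ yields $\mathbb{P}[E_k] \leq m/q$ for each $k$, and multiplying gives the claimed bound $(m/q)^n$.

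The only subtle point is the verification that $e_m$ is nonzero as a formal polynomial over $\mathbb{F}_q$ (not merely as a function) so that Schwartz--Zippel truly applies; this is immediate from the hypothesis $t \geq m$ but worth stating explicitly. Everything else is a clean factorization-and-substitution argument, so I do not expect any real obstacle.
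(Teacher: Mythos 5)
Your proposal is correct and follows essentially the same route as the paper: apply the Schwartz--Zippel Lemma coordinatewise to the degree-$m$ polynomial $e_m$ and multiply the bounds using independence of the $n$ projections. Your explicit remark that $e_m$ is nonzero as a formal polynomial (via the monomial $x_1\cdots x_m$) is a small welcome addition, but the argument is otherwise identical to the paper's.
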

\proof
We prove this result first assuming $n=1$.  Since $e_m(S)$ is a polynomial of degree $m$,  by Lemma \ref{lem:SZ},  taking $A = \mathbb{F}_q$,  we obtain

$$
	\mathbb{P}[e_m(S)=0] \leq \frac{m}{|A|} = \frac{m}{q}\,.
$$

%
%
%
%
Now we note that, if we consider a sequence $S$ of $t\geq m$ vectors of $\mathbb{F}_q^n$, then ${e_m(S)=0}$ if and only if each of the $n$ projections $\pi_i(S)$ of $S$ over the $i$-th coordinate satisfies ${e_m(\pi_i(S))=0}$. Since those projections are independent, it follows that
$$\mathbb{P}[e_m(S)=0]=\prod_{i=1}^{n}\mathbb{P}[e_m(\pi_i(S))=0]\leq \left(\frac{m}{q}\right)^{n}.$$
\endproof

We provide a first new lower bound on $EGZ(t,\mathbb{F}_q^{n},m)$ by exploiting the so-called Lov\'asz Local Lemma, see also the work \cite{Bi} of Bitz, Griffith and He for a similar application of this method. Here we state the lemma (in the symmetric case) for the reader's convenience.

\begin{lem}[\cite{LLL} (see also \cite{AS})]\label{lem:LLL}
Let $E_1, E_2, \ldots, E_k$ be events in an arbitrary probability space. Suppose that each event $E_i$ is mutually independent of the set of all other events $E_j$ but at most $d$, and that $\mathbb{P}[E_i] \leq P$ for all $1 \leq i \leq k$. If
$$
e d P \leq 1
$$
then $\mathbb{P}[\cap_{i=1}^k \overline{E_i}] > 0$.
\end{lem}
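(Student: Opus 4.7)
The plan is to prove the symmetric Lovász Local Lemma by reducing it to a stronger conditional estimate that is amenable to induction, following the classical strategy of Erdős and Lovász. First I would introduce the auxiliary parameter $x = 1/(d+1) \in (0,1)$ and aim to show by induction on $|S|$ that for every index $i$ and every $S \subseteq \{1, \ldots, k\} \setminus \{i\}$,
$$\mathbb{P}\left[E_i \,\Big|\, \bigcap_{j \in S} \overline{E_j}\right] \leq x.$$
Granting this, the desired conclusion $\mathbb{P}[\bigcap_{i=1}^k \overline{E_i}] > 0$ follows by the telescoping identity $\mathbb{P}[\bigcap_{i=1}^k \overline{E_i}] = \prod_{i=1}^k \mathbb{P}[\overline{E_i} \mid \bigcap_{j<i} \overline{E_j}] \geq (1-x)^k > 0$.

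The base case $|S|=0$ is just $\mathbb{P}[E_i] \leq P \leq x$, which holds thanks to the hypothesis $edP \leq 1$ (possibly after a negligible adjustment of $x$). For the inductive step, I would partition $S = S_1 \cup S_2$, where $S_1$ consists of the at most $d$ indices on which $E_i$ may depend and $S_2$ the remaining independent ones, and rewrite
$$\mathbb{P}\left[E_i \,\Big|\, \bigcap_{j \in S} \overline{E_j}\right] = \frac{\mathbb{P}\left[E_i \cap \bigcap_{j \in S_1} \overline{E_j} \,\Big|\, \bigcap_{j \in S_2} \overline{E_j}\right]}{\mathbb{P}\left[\bigcap_{j \in S_1} \overline{E_j} \,\Big|\, \bigcap_{j \in S_2} \overline{E_j}\right]}.$$
The numerator is bounded by $\mathbb{P}[E_i \mid \bigcap_{j \in S_2} \overline{E_j}] = \mathbb{P}[E_i] \leq P$ thanks to the mutual independence of $E_i$ from $\{E_j\}_{j \in S_2}$, which removes the extra conditioning.

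The harder half is the denominator, which I would expand telescopically as a product of conditional probabilities of the form $\mathbb{P}[\overline{E_{j_r}} \mid \bigcap_{s<r} \overline{E_{j_s}} \cap \bigcap_{j \in S_2} \overline{E_j}]$ for an arbitrary enumeration $S_1 = \{j_1, \ldots, j_{|S_1|}\}$. Since each intermediate conditioning set is strictly contained in $S$, the induction hypothesis applies to each factor and bounds it below by $1 - x$, yielding the overall lower bound $(1-x)^{|S_1|} \geq (1-x)^d$. Combining numerator and denominator gives $P/(1-x)^d \leq x$, and the choice $x = 1/(d+1)$ together with the elementary estimate $(1 + 1/d)^d \leq e$ converts the hypothesis $edP \leq 1$ into exactly this inequality, closing the induction.

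The main obstacle I expect is the bookkeeping for the denominator: one must order the events in $S_1$ so that every intermediate conditioning set is strictly smaller than $S$, which is what justifies appealing to the inductive hypothesis at each factor. The slight numerical gap between the hypothesis as stated ($edP \leq 1$) and the sharp form needed ($eP(d+1) \leq 1$) is absorbed by an elementary comparison of $d$ and $d+1$, so no new ideas are required there.
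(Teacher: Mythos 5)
Your overall strategy is the standard inductive proof of the symmetric Local Lemma (the paper itself gives no proof, citing \cite{LLL} and \cite{AS}), and the structural steps are fine: the numerator is handled correctly by mutual independence of $E_i$ from $\{E_j\}_{j\in S_2}$, and the telescoping of the denominator with the induction hypothesis applied to strictly smaller conditioning sets is the right bookkeeping. The problem is the final numerical step. With $x=1/(d+1)$ the induction closes only if $P\le x(1-x)^d=\frac{d^d}{(d+1)^{d+1}}$, and since $(1+1/d)^{d+1}>e$ for every $d\ge 1$ we have $\frac{d^d}{(d+1)^{d+1}}<\frac{1}{ed}$. So the hypothesis $edP\le 1$ does \emph{not} imply the inequality you need; it is the condition $e(d+1)P\le 1$ that does. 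Your closing remark has the comparison backwards: $edP\le 1$ is the \emph{weaker} hypothesis (it allows $P$ up to $1/(ed)>1/(e(d+1))$), so the ``slight numerical gap'' cannot be absorbed by comparing $d$ with $d+1$. For a concrete failure, take $d=1$ and $P=1/e$: then $edP\le 1$, but $P=0.367\ldots>1/4=x(1-x)^d$, so the inductive step as written cannot be completed.

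What your argument actually proves is the Alon--Spencer form of the lemma (Corollary 5.1.2 in \cite{AS}), with hypothesis $eP(d+1)\le 1$, where $d$ is the number of \emph{other} events each $E_i$ may depend on. The statement as printed here, with $edP\le 1$ under that same convention, is still true, but it requires more than the simple induction: for $d\ge 2$ it follows from Shearer's tight criterion $P\le\frac{(d-1)^{d-1}}{d^d}$, which exceeds $\frac{1}{ed}$, and the case $d=1$ can be checked directly via a union bound ($P\le 1/e<1/2$). Alternatively the statement is often quoted with $d$ counting the event itself, in which case your proof applies verbatim. Either fix the constant in the induction target (prove the $e(d+1)P\le1$ version and say so), or invoke/reprove the sharper criterion; note that for the application in Theorem~\ref{thm:LLL} the distinction is harmless, since the quantity ${\ell\choose t}-{\ell-t\choose t}$ used there already counts the event $E_{S'}$ itself and hence bounds the dependency degree plus one.
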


Now, we are ready to state the following theorem.

\begin{thm}\label{thm:LLL}
Let $\ell$ be such that
$$ e \left[ {\ell \choose t} - { \ell-t \choose t} \right] \left(\frac{m}{q}\right)^{n} \leq 1$$
where ${\ell-t \choose t}$ is set to zero if $\ell < 2t$.
Then $EGZ(t,\mathbb{F}_q^{n},m) > \ell.$
\end{thm}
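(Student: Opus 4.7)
The plan is to apply the Lov\'asz Local Lemma (Lemma \ref{lem:LLL}) in the most natural way: pick a random sequence of length $\ell$ over $\mathbb{F}_q^n$ and show that, with positive probability, no $t$-subsequence $S'$ satisfies $e_m(S')=0$. More precisely, let $g_1,g_2,\ldots,g_\ell$ be independent and uniformly chosen elements of $\mathbb{F}_q^n$, and for each $t$-subset $T\subseteq\{1,\ldots,\ell\}$ define the ``bad'' event
\[
E_T=\{e_m((g_i)_{i\in T})=0\}.
\]
If with positive probability none of the $E_T$ occur, then some concrete sequence of length $\ell$ avoids zero-sum $t$-subsequences, which by definition of $EGZ$ gives $EGZ(t,\mathbb{F}_q^{n},m)>\ell$.

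Next I would estimate the two quantities needed by Lemma \ref{lem:LLL}. The uniform bound on probabilities is immediate from Proposition \ref{bound}: since the $g_i$'s are i.i.d.\ uniform on $\mathbb{F}_q^n$, the subsequence $(g_i)_{i\in T}$ is itself a uniformly random $t$-tuple, so
\[
\mathbb{P}[E_T]\leq\left(\frac{m}{q}\right)^{n}=:P.
\]
For the dependency degree, the key observation is that $E_T$ depends only on the coordinates $g_i$ with $i\in T$, hence is mutually independent of the family $\{E_{T'}:T'\cap T=\emptyset\}$. The number of $t$-subsets of $\{1,\ldots,\ell\}$ disjoint from a fixed $T$ is $\binom{\ell-t}{t}$ (or $0$ when $\ell<2t$), so the number of $t$-subsets intersecting $T$ is $\binom{\ell}{t}-\binom{\ell-t}{t}$, and excluding $T$ itself one may take
\[
d=\binom{\ell}{t}-\binom{\ell-t}{t}-1.
\]

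Plugging into Lemma \ref{lem:LLL}, the hypothesis $e\bigl[\binom{\ell}{t}-\binom{\ell-t}{t}\bigr]\,(m/q)^{n}\leq 1$ certainly implies $edP\leq 1$, and therefore $\mathbb{P}\bigl[\bigcap_T\overline{E_T}\bigr]>0$. Choosing any realization in this positive-probability event yields a length-$\ell$ sequence in $\mathbb{F}_q^n$ with no zero-sum $t$-subsequence, establishing $EGZ(t,\mathbb{F}_q^{n},m)>\ell$. The only subtle point that really needs care is the mutual independence claim---one must argue that $E_T$ is independent of the entire $\sigma$-algebra generated by $\{E_{T'}:T'\cap T=\emptyset\}$, not just pairwise independent of each such $E_{T'}$---but this is immediate because these events are measurable with respect to the disjoint coordinate groups $(g_i)_{i\in T}$ and $(g_j)_{j\notin T}$, which are independent by construction. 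The edge case $\ell<2t$ is handled by the stated convention that $\binom{\ell-t}{t}=0$, corresponding to $d=\binom{\ell}{t}-1$, and no further modification is needed.
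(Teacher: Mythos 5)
Your proposal is correct and follows essentially the same argument as the paper: random length-$\ell$ sequence, bad events $E_{T}$ for each $t$-subset bounded via Proposition \ref{bound}, mutual independence from events on disjoint index sets giving dependency degree at most $\binom{\ell}{t}-\binom{\ell-t}{t}$, and the symmetric Lov\'asz Local Lemma. Your extra remarks (subtracting $1$ for the event itself and the $\sigma$-algebra justification of mutual independence) are minor refinements of the same proof.
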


\proof
Here we need to prove the existence of a sequence $S$ of length $\ell$ for which any subsequence $S'$ of length $t$ is such that $e_{m}(S')\not =0$.

Let us choose, uniformly at random, a sequence $S$ of length $\ell$ in $\mathbb{F}_q^{n}$. For a given subsequence $S'$ of length $t$ contained in $S$, let $E_{S'}$ be the event such that $e_{m}(S') =0$. Clearly, there are ${\ell \choose t}$ such events. Due to Proposition \ref{bound}, we know that
$$
\mathbb{P}[E_{S'}] \leq \left(\frac{m}{q}\right)^{n} \quad \text{ for all } \quad S' \subseteq S, |S'| = t.
$$
It is easy to see that each event $E_{S'}$ is mutually independent from all the events $E_{S''}$ where $S'' \subseteq S \setminus S'$ and $|S''| = t$. Therefore each event $E_{S'}$ is dependent by at most ${\ell \choose t} - {\ell-t \choose t}$ other events. Hence, due to Lemma \ref{lem:LLL} we obtain the thesis.
\endproof

Now we provide a second lower bound that, in some regime of the parameters turns out to improve that of Section 2.1. The method we use here is sometimes called Expurgation in the literature. We refer the reader to the book \cite[Chapter 3 (Alterations)]{AS}.
\begin{thm}\label{thm:exp}
Let $\ell$ be such that
$$ {\ell+s\choose t}\left(\frac{m}{q}\right)^{n}<s+1 $$
for some $s\geq 0$. Then $EGZ(t,\mathbb{F}_q^{n},m) > \ell.$
\end{thm}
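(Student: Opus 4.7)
The plan is to use the standard expurgation (alteration) technique: sample a slightly longer random sequence, bound the expected number of ``bad'' subsequences, and then delete one witness element per bad subsequence to land at length $\ell$ with no bad subsequences remaining.

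Concretely, I would draw uniformly at random a sequence $S$ of length $\ell+s$ in $\mathbb{F}_q^n$ and let $X$ be the number of $t$-subsequences $S' \subseteq S$ with $e_m(S') = 0$. By linearity of expectation and Proposition \ref{bound},
\[
\mathbb{E}[X] \;\leq\; \binom{\ell+s}{t}\left(\frac{m}{q}\right)^{n} \;<\; s+1.
\]
Since $X$ is integer-valued, there must exist a realization of $S$ with $X \leq s$. I would then construct the desired sequence by removing, from this realization, one element from each of the at most $s$ bad $t$-subsequences. The resulting sequence $S^\star$ has length at least $(\ell+s) - s = \ell$ and by construction contains no $t$-subsequence $S'$ with $e_m(S') = 0$, which shows $EGZ(t,\mathbb{F}_q^{n},m) > \ell$.

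There is essentially no hard step: the key ingredient is Proposition \ref{bound}, and the rest is the standard alteration argument as presented in \cite[Chapter 3]{AS}. The only thing to be careful about is that the expurgation must decrease the length by at most $s$ (hence removing a single element per bad subsequence, rather than the whole subsequence), and that the inequality $\mathbb{E}[X] < s+1$ is strict so that the integer $X$ can indeed be $\leq s$ in some sample. Optimizing over the auxiliary parameter $s \geq 0$ is what makes this bound potentially stronger than the Lov\'asz Local Lemma bound of Theorem \ref{thm:LLL} in certain parameter regimes.
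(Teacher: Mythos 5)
Your proposal is correct and matches the paper's proof essentially verbatim: sample a random sequence of length $\ell+s$, bound $\mathbb{E}[X]$ via Proposition \ref{bound}, pick a realization with at most $s$ bad $t$-subsequences, and delete one element per bad subsequence to obtain a sequence of length at least $\ell$ (which one may truncate to exactly $\ell$) with no zero-evaluating $t$-subsequence. No gaps; the careful points you flag (strictness of the inequality, removing single elements) are exactly those handled in the paper.
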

\proof
We first note that the thesis is equivalent to prove the existence of a sequence $S$ of length $\ell$ for which any subsequence $S'$ of length $t$ is such that ${e_{m}(S')\not =0}$.

Here we choose, uniformly at random, a sequence $T$ of length $\ell+s$ and we evaluate the expected value of the random variable $X$ given by the number of subsequences $T'$ of $T$ of length $t$ and such that $e_{m}(T')=0.$
Because of Proposition \ref{bound}, we have that
$$\mathbb{E}(X)\leq \sum_{T'\subseteq T: |T'|=t}\left(\frac{m}{q}\right)^{n}={\ell+s\choose t}\left(\frac{m}{q}\right)^{n}.$$
Moreover, due to the hypothesis, we have that $\mathbb{E}(X)<s+1$.
It follows that there exists a set $T$ of length $\ell+s$ with at most $s$ subsequences $T'$ such that $e_m(T')=0$ that we call bad subsequences. If we remove from $T$ one element from each bad subsequence we have removed at most $s$ elements and we are left with a sequence $S$ of length at least $\ell$ for which any subsequence $S'$ of length $t$ is such that $e_{m}(S')\not =0.$ Clearly, we may assume, without loss of generality, that the length of $S$ is exactly $\ell$ obtaining the thesis.
\endproof

\begin{rem}
We have been able to compute the optimal value of $s$ in the expurgation bound (Theorem \ref{thm:exp}) only for small values of $t$, i.e., $t=2,3,4,5$. In all these cases the expurgation bound performs better than the bound given in Theorem \ref{thm:LLL} obtained using the Lov\'asz Local Lemma. In view of these results, we are inclined to conjecture that the expurgation bound provides the best bound for every $\ell \geq t \geq 2$. However, since we did not succeed to prove this conjecture, we considered useful to report also the bound given in Theorem \ref{thm:LLL}.
\end{rem}

\section{Comparation with Caro and Schmitt's bounds}
In this section, we discuss the bounds we have obtained in comparison to that of Caro and Schmitt. In particular, in \cite{CGS} (see Theorems 3.1 and 3.4), it was proved that for rings of type $\mathbb{F}_p^{n}$ (where $p$ is a prime) the following bounds on $D(\mathbb{F}_p^{n},m)$ hold:
\begin{equation}\label{BoundsD} nmp-(n-1)m \geq D(\mathbb{F}_p^{n},m)\geq n p-(n-1)m.\end{equation}
It follows that the lower bound of Caro and Schmitt \eqref{LowerCS} becomes
\begin{equation}\label{LowerCS2}EGZ(t,\mathbb{F}_p^{n},m)\geq t+n (p-m).\end{equation}
Now we consider our lower bound of Theorem \ref{thm:exp} with $s=0$ in the case $q=p$. Note that this is not, in general, our best lower bound but it is the easiest to consider. We have that $EGZ(t,\mathbb{F}_p^{n},m)\geq \ell$, if $\ell$ is such that
$${\ell\choose t}\left(\frac{m}{p}\right)^{n}<1.$$
We note that $\frac{\ell^t}{t!}>{\ell\choose t}$ and hence $EGZ(t,\mathbb{F}_p^{n},m)\geq \ell$ for any $\ell$ such that
$$\frac{\ell^t}{t!}\left(\frac{m}{p}\right)^{n}<1$$
that is
$$\frac{\ell^t}{t!}<\left(\frac{p}{m}\right)^{n}$$
and hence
\begin{equation}\label{eq1}EGZ(t,\mathbb{F}_p^{n},m)\geq (t!)^{\frac{1}{t}}\left(\frac{p}{m}\right)^{\frac{n}{t}}.\end{equation}
Now, since \eqref{eq1} is,  when $p>m$, exponential in $n$, it is clear that asymptotically in $n$, it improves the lower bound of equation \eqref{LowerCS2}.
\begin{rem}
From equation \eqref{eq1}, we also have that,  if $p>m$, for sufficiently large $n$:
$$EGZ(t,\mathbb{F}_p^{n},m)\geq (t!)^{\frac{1}{t}}\left(\frac{p}{m}\right)^{\frac{n}{t}}> t+nm(p-1)\geq t+D(\mathbb{F}_p^n,m)-m$$
where the last inequality follows from the upper bound of equation \eqref{BoundsD}.
This means that, for these kinds of parameters it does not yield a Caro-Gao-type relation (see \cite{C2,C3,G}), i.e. it does not hold the equality in equation \eqref{LowerCS}. 
\end{rem}
We also note that the bound of equation \eqref{eq1} can be trivially improved for several values of $q=p^k$. Indeed, if ${t\choose m}\not \equiv 0 \pmod{p}$ we have that $EGZ(t,\mathbb{F}_q^{n},m)=\infty$. It suffices to consider the infinite constant sequence such that $g_i=1$ for any $i\in \mathbb{N}$.
In this case we have that, for any subsequence $S'$ of length $t$, $e_m(S')={t\choose m}\not\equiv 0\pmod{p}$. On the other hand, this is a subset of the parameters for which our bounds of Section 2 (and in particular equation \eqref{eq1}) hold.  Moreover,  we will show in the upcoming section that, at least when $q=p^k$,  $t=p$ and $m=p-1$,  it is possible to provide nontrivial upper bounds.

Finally, we also note that the bounds here presented can be easily generalized to rings of type $\mathbb{F}_{q_1}\times \mathbb{F}_{q_2}\times \cdots \times \mathbb{F}_{q_n}$ (similarly to those of \cite{CS} that Caro and Schmitt stated for rings of type $\mathbb{Z}_{m_1}\times \mathbb{Z}_{m_2}\times \cdots \times \mathbb{Z}_{m_n}$) but, since we believe this is not a substantial improvement, we prefer to keep the notation of this note as simple as possible and to explicitly consider only rings of type $\mathbb{F}_q^n$.
\section{Upper bounds}
In this section we provide an upper bound to $EGZ(p,\mathbb{F}_{q}^{n},p-1)$ with $q=p^k$,  where $p$ is an odd prime.  In the first part of this section, we provide using the Ellenberg–Gijswijt \cite{ellenberg-gjswijt-2017} and Sauermann \cite{Sauer} bounds for the cap-set problem a general upper bound to $EGZ(p,\mathbb{F}_{q}^{n},p-1)$ for every prime $p \geq 3$. Then,  we use the so-called Slice Rank method, introduced by Terence Tao in \cite{Blog1} and revisited by Tao and Sawin in \cite{Blog2} (see also \cite{CostaDalai2021} for a discussion on the method) in order to generalize the polynomial approach introduced in \cite{CLP} and in \cite{ellenberg-gjswijt-2017},  to improve the bounds that can be deduced by the Ellenberg–Gijswijt bound for $p=3$ and $k = 2, 3, 4, 5$.
Our application of the method is somehow reminiscent of works on the classical Erd\H{o}s-Ginzburg-Ziv constants of Fox and Sauermann \cite{FS} and Naslund \cite{N}.

Let us first state the following theorems that will be used in Theorem \ref{thm:generalBound} to provide a general bound on $EGZ(p,\mathbb{F}_{q}^{n},p-1)$ for $q=p^k$ and $k \geq 2$.

\begin{thm}[Ellenberg–Gijswijt \cite{ellenberg-gjswijt-2017}]\label{thm:EG}
Let $A$ be a subset of $\mathbb{F}_3^n$ which does not contain $3$ distinct elements $x_1$,  $x_2$,  $x_3$ such that $x_1+x_2+x_3 = 0$.  Then,  for $n \to \infty$,  we have that
$$
	|A| \leq \left(\frac{3}{8} \sqrt[3]{207 + 33 \sqrt{33}} + o(1) \right)^n \approx (2.756 + o(1))^n\,.
$$
\end{thm}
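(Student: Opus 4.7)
The plan is to apply the slice-rank (polynomial) method to a trilinear indicator of the constraint $x+y+z=0$. Concretely, define
\[
F\colon (\mathbb{F}_3^n)^3 \to \mathbb{F}_3, \qquad F(x,y,z) = \prod_{i=1}^{n}\bigl(1-(x_i+y_i+z_i)^2\bigr),
\]
so that $F(x,y,z)=1$ precisely when $x+y+z=0$ and $0$ otherwise. Restricted to $A\times A\times A$, the cap-set hypothesis forces every solution of $x+y+z=0$ inside $A^3$ to have two equal coordinates, and over $\mathbb{F}_3$ two equal forces all three equal. Hence $F|_{A^3}$ is the diagonal tensor $\mathbf{1}[x=y=z]$, whose slice rank is exactly $|A|$ by the standard diagonal-tensor lemma.

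Next I would bound the slice rank of $F$ from above through its polynomial structure. The expansion of $F$ is a sum of monomials $x^a y^b z^c$ with $a,b,c\in\{0,1,2\}^n$ and $|a|+|b|+|c|\le 2n$, so in every monomial at least one of $|a|,|b|,|c|$ is at most $2n/3$. Partitioning the monomials according to which exponent vector achieves this minimum yields a decomposition
\[
F(x,y,z) = \sum_{a} x^a\,P_a(y,z) \;+\; \sum_{b} y^b\,Q_b(x,z) \;+\; \sum_{c} z^c\,R_c(x,y),
\]
with $a,b,c$ ranging over exponent vectors of weight at most $2n/3$. Writing $M_{n,d}$ for the number of $(a_1,\dots,a_n)\in\{0,1,2\}^n$ with $\sum_i a_i\le d$, the slice rank of $F$ is at most $3\,M_{n,\lfloor 2n/3\rfloor}$; combining with Step 1 gives $|A|\le 3\,M_{n,\lfloor 2n/3\rfloor}$.

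The final step is to estimate $M_{n,2n/3}$ asymptotically. Since $M_{n,d}=\sum_{k\le d}[t^k](1+t+t^2)^n$, the usual Chernoff/saddle-point bound gives
\[
M_{n,2n/3} \le \min_{t\in(0,1]}\,\frac{(1+t+t^2)^n}{t^{2n/3}} = r^{\,n}, \qquad r = \min_{t\in(0,1]}\frac{1+t+t^2}{t^{2/3}}.
\]
A calculus computation shows that the optimal $t^\ast$ is the positive root of an explicit cubic, and simplification of the resulting value of $r$ yields the closed form $\tfrac{3}{8}\sqrt[3]{207+33\sqrt{33}}\approx 2.756$. The constant factor $3$ and subexponential corrections are absorbed into the $o(1)$ in the exponent, producing the claimed bound.

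The principal obstacle, in my view, is not the slice-rank bookkeeping, which is mechanical once $F$ is chosen, but two more delicate points. The first is the diagonal-tensor lemma itself, namely that the identity tensor on $A^3$ has slice rank exactly $|A|$; this is the genuine combinatorial engine of the argument. The second is the closed-form evaluation: matching the generating-function optimum of $(1+t+t^2)/t^{2/3}$ against the expression $\tfrac{3}{8}\sqrt[3]{207+33\sqrt{33}}$ requires some patient algebra with the defining cubic of $t^\ast$. Once these are in hand, letting $n\to\infty$ completes the proof.
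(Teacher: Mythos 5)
The paper offers no proof of this statement: it is quoted as an external black box from Ellenberg--Gijswijt \cite{ellenberg-gjswijt-2017} and only \emph{used} (inside Theorem \ref{thm:generalBound}), so there is no internal argument to compare against. Your proposal is, however, an essentially correct reconstruction of the standard proof in Tao's slice-rank formulation, and it is consistent with the machinery the authors themselves deploy later: your diagonal-tensor lemma is exactly Lemma \ref{Lemma1}, and your monomial bookkeeping is the concrete version of the entropy bound of Proposition \ref{Blog2prop}. All the key steps check out: over $\mathbb{F}_3$ the function $F(x,y,z)=\prod_i\bigl(1-(x_i+y_i+z_i)^2\bigr)$ is the indicator of $x+y+z=0$; on $A^3$ the cap-set hypothesis plus the fact that two equal coordinates force all three equal (since $2x+z=0$ gives $z=x$) makes $F$ diagonal with nonzero diagonal entries $F(x,x,x)=1$, so $srk(F|_{A^3})=|A|$; grouping each monomial $x^ay^bz^c$ (with $|a|+|b|+|c|\le 2n$) by the block of smallest weight gives $srk(F)\le 3M_{n,\lfloor 2n/3\rfloor}$; and the Chernoff bound $M_{n,2n/3}\le\min_{0<t\le1}(1+t+t^2)^nt^{-2n/3}$ yields the stated constant, the factor $3$ being absorbed into the $o(1)$. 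Two small corrections: the optimality condition is the \emph{quadratic} $4t^2+t-2=0$, giving $t^\ast=(\sqrt{33}-1)/8$, not a cubic --- the cube root only appears when substituting back, and indeed one checks $(1+t^\ast+t^{\ast2})^3/t^{\ast2}=\tfrac{27}{512}\bigl(207+33\sqrt{33}\bigr)$, confirming the closed form $\tfrac{3}{8}\sqrt[3]{207+33\sqrt{33}}$; and you should say explicitly that each grouped term $x^aP_a(y,z)$ is a slice, which is what converts the monomial count into a slice-rank bound. As a remark, running the authors' own entropy formalism on $\Gamma=\supp\bigl(1-(x+y+z)^2\bigr)$ recovers precisely this exponent, so your route and the paper's Section 4 apparatus are two phrasings of the same method.
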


\begin{thm}[Sauermann \cite{Sauer}]\label{thm:Sauer}
Let $p \geq 5$ be a prime and let $A$ be a subset of $\mathbb{F}_p^n$ which does not contain $p$ distinct elements $x_1$,  $x_2$,  $\ldots$,  $x_p$ such that $x_1+x_2+\ldots+x_p = 0$.  Then,  for $n \to \infty$,  we have that
$$
	|A| \leq \left(2 \sqrt{p} + o(1)\right)^n.
$$
\end{thm}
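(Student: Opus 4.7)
The plan is to prove the bound via the slice rank / partition rank polynomial method, extending the Croot--Lev--Pach and Ellenberg--Gijswijt framework from $3$-term to $p$-term equations. Writing each $x_i \in \mathbb{F}_p^n$ in coordinates as $(x_{i,1},\dots,x_{i,n})$, I would encode the condition $x_1+\cdots+x_p=0$ by the tensor
$$F(x_1,\dots,x_p)=\prod_{j=1}^{n}\bigl(1-(x_{1,j}+\cdots+x_{p,j})^{p-1}\bigr),$$
which, by Fermat's little theorem, equals $1$ exactly when $x_1+\cdots+x_p=0$ and $0$ otherwise. Under the hypothesis on $A$, the restriction $F|_{A^p}$ vanishes on every tuple of $p$ pairwise distinct elements, while $F(a,a,\dots,a)=1$ for every $a\in A$ since $pa=0$ in $\mathbb{F}_p^n$; thus $F|_{A^p}$ has full diagonal support on $A$.

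The first step is to upper bound the partition rank of $F|_{A^p}$ via its monomial expansion. Each monomial has total degree at most $n(p-1)$ in the $np$ variables $x_{i,j}$, with every exponent in $\{0,\dots,p-1\}$. Splitting $\{1,\dots,p\}$ into two balanced halves and applying a pigeonhole / symmetry argument on the total degree, each monomial can be written as a product of two factors depending on disjoint variable sets, at least one of which has total degree at most $n(p-1)/2$. A saddle-point / Chernoff estimate on the generating function $(1+z+\cdots+z^{p-1})^n$ near the balanced degree, combined with a symmetry-breaking argument reflecting the $p$-fold product structure, bounds the number of such reduced monomials by $(2\sqrt{p}+o(1))^n$, and hence the partition rank of $F|_{A^p}$ by the same quantity.

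The second step is to establish the matching lower bound $\mathrm{prank}(F|_{A^p})\ge |A|$. The elementary Tao diagonal lemma for slice rank applies only to tensors supported exactly on the diagonal, but here $F|_{A^p}$ may also be nonzero on intermediate strata, namely tuples with partial but not total coincidences whose sum still vanishes. The crucial ingredient is Sauermann's extension of the diagonal lemma to this more general setting: one proceeds inductively on the coincidence pattern, at each step peeling off a full-rank diagonal-like contribution of dimension $|A|$ and reducing the rest to a tensor over fewer variables, ultimately yielding the partition rank bound $\mathrm{prank}(F|_{A^p})\ge|A|$.

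Combining the two bounds gives $|A|\le(2\sqrt{p}+o(1))^n$. The hard part is the lower bound of the second step: the polynomial encoding and the upper bound are natural generalizations of Ellenberg--Gijswijt, while the refined diagonal lemma that must handle the intermediate coincidence strata requires a genuinely new combinatorial argument, which is precisely Sauermann's contribution beyond the cap-set case $p=3$.
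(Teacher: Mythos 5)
The paper does not prove this statement at all: Theorem \ref{thm:Sauer} is imported verbatim from Sauermann \cite{Sauer} and used as a black box in the proof of Theorem \ref{thm:generalBound}, so there is no internal proof to compare against and your sketch has to stand on its own. It does not, for two reasons. First, the rank upper bound is unsubstantiated. If you split the $p$ vector variables into two balanced halves and keep, for each monomial, the factor of total degree at most $n(p-1)/2$, you must count monomials in roughly $np/2$ scalar variables with exponents in $\{0,\dots,p-1\}$ and total degree at most $n(p-1)/2$; this count is exponential in $np$, not in $n$ (already the monomials with exponents in $\{0,1,2\}$ contribute about $3^{np/2}$ of them), so the resulting bound exceeds $p^n$ and is vacuous. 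The generating function $(1+z+\cdots+z^{p-1})^n$ you invoke governs a single block of $n$ variables, i.e.\ the slice-rank pigeonhole over all $p$ blocks with threshold $(p-1)n/p$, and that computation yields a bound of the shape $\left((c_p+o(1))\,p\right)^n$ with $c_p$ bounded away from zero --- the regime of Naslund-type bounds \cite{N} --- which for large $p$ is far weaker than $(2\sqrt{p})^n$. No saddle-point or ``symmetry-breaking'' variant of this count produces the constant $2\sqrt{p}$ from this tensor.

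Second, and more fundamentally, the lower bound step rests on a lemma that does not exist in the form you use it. Tao's diagonal lemma requires the restricted tensor to be supported exactly on the diagonal, and a tensor that is additionally nonzero on partial-coincidence strata (tuples with repeated entries whose sum vanishes, which the hypothesis on $A$ does not exclude) need \emph{not} have slice or partition rank at least $|A|$; there is no general ``peeling'' extension that recovers the bound, and this is precisely the obstruction that makes the problem hard. Sauermann's actual argument does not lower-bound the rank of this single indicator tensor: her new contribution is a combinatorial reduction handling the degenerate (non-distinct) solutions, after which the polynomial method is applied to a suitably restructured (multi-colored/disjoint-tuples) problem, and the value $2\sqrt{p}$ arises from that combined argument rather than from a monomial count of the naive tensor. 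So although you correctly locate the difficulty in the off-diagonal support, both halves of your proposal are gaps: the claimed $(2\sqrt{p}+o(1))^n$ rank bound is not established, and the claimed rank-versus-$|A|$ inequality is false as stated.
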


Now we consider a sequence $S=(g_1,g_2,\dots,g_\ell)$ of elements in $\mathbb{F}_{q}^{n}$ with $q=p^k$ such that every $p$-tuple of elements $g_1',g_2',\ldots, g_p'$ of $S$ satisfies $e_{p-1}(g_1',g_2',\ldots, g'_p)$ $=\sum_{1\leq i_1<\cdots<i_{p-1}\leq p}$ $\prod_{j=1}^{p-1} g'_{i_j} \not=0$. We note that $S$ can not have elements repeated more than $p-1$ times since $e_{p-1}(g_1',g_2',\ldots, g'_p) = 0$ whenever $g_1'=g_2'=\ldots = g_p'$. It means that we can remove the repeated elements in $S$ obtaining a set $S_1$ with $|S_1|\geq \frac{|S|}{p-1}$. 
Clearly, to upper bound the length of the sequence $S$, it suffices to bound the cardinality of $S_1$ considered as a set (it has no repetitions). Since it does not admit repeated elements, we already have that
\begin{equation}\label{UpperFor3}
\frac{|S|}{p-1} \leq |S_1|\leq q^n.
\end{equation}

Now, we are ready to state our first result whose proof has been pointed out by an anonymous referee.

\begin{thm}\label{thm:generalBound}
Let $p$ be an odd prime and $k$ a positive integer.  Then we have that when $q$ is not a  power of $p$
$$
	EGZ(p,  \mathbb{F}_q^n, p-1) = \infty\,.
$$
While for $q = p^k$,  we have that
$$
EGZ(p,  \mathbb{F}_q^n, p-1) \leq \begin{cases}
q^{n + o(n)} &\text{ for } p=3,5 \text{ and } k=1 \\
(2.756^k +1)^{n + o(n)} &\text{ for } p=3 \text{ and } k \geq 2 \\
\left(2^k \sqrt{q}+1 \right)^{n+o(n)} &\text{ for } p \neq 3 \text{ and } q \geq 7 \\
\end{cases}\,.
$$
\end{thm}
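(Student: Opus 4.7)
The plan is to split the proof into the two clauses. For the \emph{infinity} clause ($q$ not a power of $p$), I would exhibit the constant sequence $(\mathbf{1},\mathbf{1},\ldots)$ where $\mathbf{1}\in\mathbb{F}_q^n$ is the all-ones vector: any $p$-subsequence satisfies $e_{p-1}(\mathbf{1},\ldots,\mathbf{1})=\binom{p}{p-1}\mathbf{1}=p\,\mathbf{1}$, which is nonzero in $\mathbb{F}_q^n$ because $\operatorname{char}(\mathbb{F}_q)\neq p$. Thus arbitrarily long sequences avoid the zero-sum condition, giving $EGZ(p,\mathbb{F}_q^n,p-1)=\infty$.

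For $q=p^k$, I would start from the set $S_1$ extracted in the discussion preceding the theorem, satisfying $|S|\leq (p-1)|S_1|$ and $|S_1|\leq q^n$. The trivial bound immediately handles the first sub-case ($p\in\{3,5\}$ and $k=1$) since $(p-1)q^n\leq q^{n+o(n)}$. For the other two cases, I would use the key algebraic identity that on the group of units of the ring $\mathbb{F}_q^m$ (vectors with all coordinates nonzero),
\[
e_{p-1}(h_1,\ldots,h_p)=\Bigl(\prod_{i=1}^p h_i\Bigr)\sum_{i=1}^p h_i^{-1},
\]
so that $e_{p-1}(h_1,\ldots,h_p)=0$ is equivalent to $\sum_i h_i^{-1}=0$, converting the higher-degree condition into a sum-zero condition compatible with the cap-set bounds.

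To handle non-invertible elements, I would partition $S_1$ according to the support pattern: for $J\subseteq[n]$ let $S_1^J$ collect the elements of $S_1$ whose zero coordinates are exactly those indexed by $J$. On coordinates in $J$, the value of $e_{p-1}$ on any $p$-tuple from $S_1^J$ vanishes automatically (each product contains a zero factor), so the condition $e_{p-1}\neq 0$ reduces to the same condition on the restrictions to $[n]\setminus J$, where every element is a unit. Applying the identity above and inverting coordinate-wise on $[n]\setminus J$ puts $S_1^J$ in bijection with a subset of $\mathbb{F}_q^{n-|J|}$ containing no $p$ distinct elements summing to zero. Viewing $\mathbb{F}_q^{n-|J|}\cong\mathbb{F}_p^{k(n-|J|)}$ and applying Theorem~\ref{thm:EG} (for $p=3$) or Theorem~\ref{thm:Sauer} (for $p\geq 5$) yields
\[
|S_1^J|\leq C^{k(n-|J|)+o(n)},\qquad C=\begin{cases}2.756 & \text{if }p=3,\\ 2\sqrt{p} & \text{if }p\geq 5.\end{cases}
\]

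Summing over $J$ via the binomial theorem,
\[
|S_1|=\sum_{J\subseteq[n]}|S_1^J|\leq\sum_{r=0}^n\binom{n}{r}C^{k(n-r)+o(n)}=(C^k+1)^{n+o(n)},
\]
and absorbing the factor $p-1$ into the $o(n)$ term yields $(2.756^k+1)^{n+o(n)}$ for $p=3$, $k\geq 2$, and, using $C^k=(2\sqrt{p})^k=2^k\sqrt{q}$, the bound $(2^k\sqrt{q}+1)^{n+o(n)}$ for $p\geq 5$, $q\geq 7$. The main technical step is combining the units identity with the support-pattern partition: this is precisely what produces the extra ``$+1$'' in the bound via the binomial expansion, while the cap-set inputs of Ellenberg--Gijswijt and Sauermann are applied off-the-shelf to each piece.
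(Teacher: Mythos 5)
Your argument follows the same route as the paper's: the constant all-ones sequence for the infinity clause, the trivial bound for $k=1$, and for $k\ge 2$ the partition of $S_1$ by support pattern together with coordinate-wise inversion, the Ellenberg--Gijswijt and Sauermann cap-set bounds on each support class, and a binomial expansion over supports. Your identity $e_{p-1}(h_1,\ldots,h_p)=\bigl(\prod_i h_i\bigr)\sum_i h_i^{-1}$ is just a cleaner phrasing of the paper's step of multiplying the relation $\sum_j x_{j,i}^{-1}=0$ by $x_{1,i}\cdots x_{p,i}$.

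The one step you gloss over is the uniformity of the error term in the claim $|S_1^J|\le C^{k(n-|J|)+o(n)}$. Theorems \ref{thm:EG} and \ref{thm:Sauer} are asymptotic in the ambient dimension, so they do not directly control $|S_1^J|$ when $n-|J|$ is bounded or grows sublinearly, and if the threshold separating ``small'' from ``large'' supports is a fixed fraction $\alpha n$ the trivial bound $q^{n-|J|}$ for small supports is \emph{not} of the form $C^{k(n-|J|)}e^{o(n)}$. The paper handles this by fixing $0<\alpha\le 1/4$, bounding the total contribution of supports of size $\le\alpha n$ trivially by $2^n q^{\alpha n}$, and observing that $2q^\alpha<u_{p,k}$ so that this piece is $o(u_{p,k}^n)$ and is absorbed; only the supports of size $\ge\alpha n$ enter the binomial sum, where the cap-set bound applies uniformly. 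Your argument can be repaired identically, or by letting the threshold grow sublinearly (e.g.\ at rate $\sqrt{n}$) so the error is genuinely $o(n)$ for every $J$; as written, the uniform $o(n)$ is asserted without justification.
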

\begin{proof}
We note that, if $\mathbb{F}_q$ has characteristic $p' \neq p$,  ${p \choose p-1}=p \not\equiv 0 \pmod{p'}$. In this case we consider the infinite constant sequence such that $g_i=1$ for any $i\in \mathbb{N}$. Here we have that, for any subsequence $S'$ of length $p$, $e_{p-1}(S')={p\choose p-1}=p\not\equiv 0\pmod{p'}$ and hence $EGZ(p,\mathbb{F}_{q}^{n},p-1)=\infty$ whenever $q$ is not a power of $p$.  Therefore we can assume that $q = p^k$.  For $k=1$ and $p = 3, 5$ then the upper bound of this theorem follows directly from equation \eqref{UpperFor3}.  Hence we can suppose that $k \geq 2$. 

Now,  let $S \subseteq \mathbb{F}_q^n$ be a subset not containing $p$ distinct elements $x_1$, $x_2$, $\ldots$, $x_{p} \in S$ such that $e_{p-1} (x_1, x_2, \ldots, x_p) = 0$.  For every $P \subseteq \{1, 2, \ldots, n\}$,  let us denote with $S_P = \{ v \in S\:|\:\supp(v) = P \}$,  where the $\supp(v)$ is defined as the set of coordinates in which $v$ is nonzero,  the set of vectors in $S$ that have the same support $P$.  Let also denote with $S'_P\subseteq \mathbb{F}_q^{|P|}$ the set obtained from $S_P$ restricting every vector $v \in S_P$ only to coordinates in $P$.  This guarantees us that all the entries of $S_P'$ are nonzero elements of $\mathbb{F}_q$.  Then we construct a new set $S_P'' \subseteq \mathbb{F}_q^{|P|}$ by replacing every vector $(a_1,  a_2, \ldots, a_{|P|}) \in S_P'$ by $(a_1^{-1}, a_2^{-1}, \ldots,  a_{|P|}^{-1}) \in S_P''$.  Clearly,  $|S_P| = |S_P'| = |S_P''|$.  We claim that $S_P''$ does not contain $p$ distinct elements summing to zero in $\mathbb{F}_q^{|P|}$.  Indeed,  suppose by contradiction there exist $p$ distinct vectors $x_1,  x_2,  \ldots,  x_p \in S_P'$ such that $x_{1, i}^{-1} + x_{2,i}^{-1} + \ldots + x_{p, i}^{-1} = 0$ for every $i$ then we can multiply both sides of the previous equation by $x_{1, i} x_{2, i} \cdots x_{p, i}$ to obtain that $e_{p-1} (x_1,  x_2, \ldots, x_p) = 0$.  But this is absurd due to the initial hypothesis on $S$.

Since, as an abelian group under addition,  $\mathbb{F}_q^{|P|}$ is isomorphic to $\mathbb{F}_p^{k |P|}$,  by Theorem \ref{thm:EG} and \ref{thm:Sauer} we have that,  for fixed $p$ and $k$,  $|S_P| \leq \left(u_{p, k} +o(1) \right)^{|P|}$ for every $P \subseteq \{1, 2,\ldots, n\}$ such that $|P| = \alpha n (1+o(1))$ for some $0 < \alpha < 1$, where
$$
u_{p, k} := \begin{cases} 2.756^{k} &\text{for } p=3 \text{ and } k \geq 2 \\
	(2 \sqrt{p})^{k} &\text{for } p \neq 3 \text{ and } p^k \geq 7	
	 \end{cases}.
$$
Hence,  for any real $0 < \alpha \leq 1/4$,  we get
\begin{align*}
|S| &\leq \sum_{\substack{P \subseteq \{1, 2,\ldots, n\} \\ |P|  \leq \alpha n}} |S_P| + \sum_{\substack{P \subseteq \{1, 2,\ldots, n\} \\ |P| \geq \alpha n}} |S_P| \stackrel{(i)}{\leq} 2^n q^{\alpha n} + \sum_{i \geq \alpha n}^{n} \sum_{\substack{P \subseteq \{1, 2,\ldots, n\} \\ |P| = i}} |S_P| \\ &\stackrel{(ii)}{\leq} o(u_{p,k}^n) + \sum_{i \geq \alpha n}^{n} \binom{n}{i} \left(u_{p,k} + o(1) \right)^i \stackrel{(iii)}{\leq} o(u_{p,k}^n) +  \left(u_{p,k} + 1 + o(1) \right)^n \\ &= \left(u_{p,k} + 1 \right)^{n+o(n)}\,,
\end{align*}
where $(i)$ follows since $|S_P| \leq q^{\alpha n}$ for $|P| \leq \alpha n$.  Since $2 q^{\alpha} = 2 p^{\alpha k} < u_{p,k}$ for every $\alpha < 1/4$ and $k \geq 1$, inequality $(ii)$ holds due to the fact that $|S_P| \leq \left(u_{p,k} + o(1) \right)^{|P|}$.  Finally inequality $(iii)$ is due to the binomial theorem. Therefore the theorem follows.
\end{proof}

We will see that it is possible to improve the bounds given in Theorem \ref{thm:generalBound} using the Slice Rank method for $q = 3^k$ and $k=2,3,4,5$. 

As done before,  let $S = (g_1, g_2, \ldots, g_{\ell})$ be a sequence of elements in $\mathbb{F}_q^n$ with $q = 3^k$ such that for every three elements $g'_1,  g'_2,  g'_3$ of $S$ satisfies $e_{2}(g'_1, g'_2,  g'_3) \neq 0$.  Let $S_1$ be the set obtained from removing the repeated elements in $S$.  By equation \eqref{UpperFor3} we have that $|S_1| \geq |S|/2$.  Now we split $S_1$ in $n+1$ sets $S_1^0,S_1^1,\dots, S_1^{n}$ where $g_i\in S_1^{j}$ if $g_i$ has exactly $j$ coordinates equal to zero.  We note that there exists $j$ such that $$|S_1^{j}|\geq \frac{|S_1|}{n+1} \geq \frac{|S|}{2(n+1)}.$$

Now,  let us recall some definitions and lemmas from \cite{Blog1} and \cite{Blog2}.
\begin{defn}
A function $T:A^k\rightarrow \mathbb{F}$ is said to be a slice if it can be written in the form
$$T(x_1,\dots,x_k)=T_1(x_i)T_2(x_1,\dots,x_{i-1},x_{i+1},\dots,x_k)$$
where $T_1:A\rightarrow \mathbb{F}$ and $T_2: A^{k-1}\rightarrow \mathbb{F}$.
\end{defn}
\begin{defn}
The Slice Rank $srk(T)$ of a general function $T:A^k\rightarrow \mathbb{F}$ is the smallest number $m$ such that $T$ is a linear combination of $m$ slices.
\end{defn}
\begin{lem}[\cite{Blog1}]\label{Lemma1}
Let $A$ be a finite set and $\mathbb{F}$ be a field. Let $T(x,y,z)$ be a function $A\times A\times A\rightarrow \mathbb{F}$ such that $T(x,y,z)\not=0$ if and only if $x=y=z$. Then $srk(T) = |A|$.
\end{lem}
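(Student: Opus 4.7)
The plan is to prove the equality $srk(T) = |A|$ by establishing the two inequalities separately, with the upper bound being essentially a one-line construction and the lower bound being the substantive part of the argument.

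For the upper bound $srk(T) \leq |A|$, I would exhibit an explicit decomposition. Since $T(x,y,z)$ vanishes off the diagonal, one can write
$$T(x,y,z) = \sum_{a \in A} T(a,a,a)\,\mathbf{1}_{x=a}(x)\,\mathbf{1}_{y=z=a}(y,z),$$
and each summand is a slice: it is a function of $x$ alone multiplied by a function of $(y,z)$ alone. This gives at most $|A|$ slices.

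For the lower bound, suppose $T = \sum_{i \in I_1} f_i(x)F_i(y,z) + \sum_{j \in I_2} g_j(y)G_j(x,z) + \sum_{k \in I_3} h_k(z)H_k(x,y)$ with $|I_1|+|I_2|+|I_3| = srk(T) =: r$. The idea is to eliminate the $x$-slices by contracting against a suitable function. Consider the subspace $W \subseteq \mathbb{F}^A$ of functions $v$ satisfying $\sum_{x \in A} v(x) f_i(x) = 0$ for every $i \in I_1$; these are $|I_1|$ linear constraints, so $\dim W \geq |A|-|I_1|$. For such $v$, define $T'(y,z) := \sum_{x} v(x)\, T(x,y,z)$. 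On the one hand, because $T$ is supported on the diagonal, $T'(y,z) = v(y)\,T(y,y,y)\,\mathbf{1}_{y=z}$, so its rank as a matrix equals $|\supp(v)|$. On the other hand, the contraction kills the $x$-slice terms and leaves a bilinear form in $(y,z)$ of matrix rank at most $|I_2|+|I_3|$.

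The main obstacle is to produce a single vector $v \in W$ whose support is as large as $\dim W$, so that the two estimates on the rank of $T'$ can be combined. I would use the standard linear-algebra fact that any subspace $W \subseteq \mathbb{F}^A$ of dimension $d$ contains an element of support at least $d$: choose $d$ coordinates on which the restriction map $W \to \mathbb{F}^d$ is surjective (possible since $\dim W = d$), then lift a vector with all nonzero coordinates. Applying this yields $|A|-|I_1| \leq |\supp(v)| \leq |I_2|+|I_3|$, hence $|A| \leq r = srk(T)$, closing the argument.
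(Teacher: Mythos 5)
Your proof is correct. The paper states this lemma without proof, citing Tao's blog post \cite{Blog1}, and your argument reproduces the standard proof given there: the easy upper bound from the diagonal slice decomposition, and the lower bound via contracting $T$ against a vector $v$ of large support in the annihilator of the $x$-slice functions, then comparing the resulting matrix rank computed two ways. The one point worth being careful about — that a $d$-dimensional subspace of $\mathbb{F}^A$ contains a vector of support at least $d$ — you handle correctly by picking a set of $d$ coordinates on which the restriction is an isomorphism and lifting, say, the all-ones vector.
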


In order to apply Lemma \ref{Lemma1}, we want to consider a function that is zero whenever we consider three different elements of $S_1^j$. In particular, given $x,y,z\in \mathbb{F}_q^n$ we consider
\begin{equation}\label{eq:poleq}
P(x,y,z)=\prod_{i=1}^{n} (1-(x_iy_i+y_iz_i+z_ix_i)^{q-1}).
\end{equation}
\begin{lem}\label{LemmaNostro}
Let us consider the function $P(x,y,z)$ on the restricted domain $S_1^j\times S_1^j\times S_1^j\rightarrow \mathbb{F}_q$ where $q=3^k$. Then $P(x,y,z)\not=0$ if and only if $x=y=z$.
\end{lem}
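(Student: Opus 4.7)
My plan is to first observe that in $\mathbb{F}_q$ the value $u^{q-1}$ is either $0$ or $1$, equaling $0$ exactly when $u=0$. Consequently each factor $1-(x_iy_i+y_iz_i+z_ix_i)^{q-1}$ of $P(x,y,z)$ is itself $0$ or $1$, and $P(x,y,z)\neq 0$ if and only if $x_iy_i+y_iz_i+z_ix_i=0$ at every coordinate $i$, i.e., if and only if $e_2(x,y,z)=0$ as a vector of $\mathbb{F}_q^n$. Hence the lemma reduces to the claim that, for $x,y,z\in S_1^j$, the identity $e_2(x,y,z)=0$ holds precisely when $x=y=z$.

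Since $P$ is symmetric in its three arguments, I would split the analysis into three cases. If $x=y=z$, then $e_2(x,x,x)=3x^2=0$ because $q=3^k$ has characteristic $3$, which gives $P(x,x,x)=1\neq 0$ and establishes the ``if'' direction. If $x,y,z\in S_1^j$ are pairwise distinct, they correspond to three distinct positions of the sequence $S$, so the hypothesis that no three-element subsequence of $S$ has $e_2$ equal to zero immediately yields $e_2(x,y,z)\neq 0$, and therefore $P(x,y,z)=0$.

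The remaining and, in my view, most delicate case is when exactly two of the three entries coincide, say $x=y\neq z$. Here the assumption on subsequences of $S$ does not apply directly, since $x$ might appear only once in $S$; instead I would exploit the structural constraint that both $x$ and $z$ lie in $S_1^j$ and therefore have exactly $j$ zero coordinates. A short computation gives $e_2(x,x,z)=x^2+2xz=x(x-z)$ in characteristic $3$. If this vector vanished, then at each coordinate $i$ one would have $x_i=0$ or $x_i=z_i$, so at every index where $x_i\neq 0$ also $z_i=x_i\neq 0$; this forces the zero set of $z$ to be contained in that of $x$, and since both have size $j$ the two sets coincide, whence $x=z$ on the common nonzero support and $x=z=0$ elsewhere, contradicting $x\neq z$. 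Combining the three cases proves the lemma.
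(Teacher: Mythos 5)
Your proof is correct and follows essentially the same approach as the paper: reduce to the three cases (all equal, all distinct, exactly two equal), use Fermat's little theorem to turn each factor of $P$ into a $0$/$1$ indicator, handle the all-equal case by characteristic $3$, the all-distinct case by the hypothesis on $S$, and the two-equal case via the identity $e_2(x,x,z)=x(x-z)$ together with the fact that $x$ and $z$ have the same number of zero coordinates. The only cosmetic difference is that the paper directly exhibits a coordinate $i$ with $x_i\neq 0$ and $x_i\neq z_i$, while you argue by contradiction that $e_2(x,x,z)=0$ would force $x=z$; the two phrasings are equivalent, and your version actually spells out the support-comparison step a bit more fully than the paper does.
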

\proof
Here we have that, if $x,y,z$ are in $S_1^j$, then $P(x,y,z)\not=0$ if and only if $x=y=z$. Indeed, if $x,y$, and $z$ are three different elements of $S_1^j$, they are such that $xy+yz+zx\not=0$ and hence $x_iy_i+y_iz_i+z_ix_i\not=0$ for at least one $i\in [1,n]$. This means that $1-(x_iy_i+y_iz_i+z_ix_i)^{q-1}=0$ and hence $P(x,y,z)=0$.

We note that also if we consider an element $x\in S_1^j$ repeated twice and $z\not=x$, we have that $P(x,x,z)=0$. Indeed, since $x$ and $z$ have the same number of zero components, there exists $i$ such that $x_i\not=0$ and $x_i\not=z_i$. Here we have that
$$x_ix_i+x_iz_i+z_ix_i=x_i^2+2x_iz_i=x_i(x_i-z_i)\not=0$$
since both $x_i-z_i$ and $x_i$ are nonzero.
It follows that $P(x,x,z)=0$. Similarly, we prove that also $P(z,x,x)=0$ and $P(x,z,x)=0$.

Finally, we consider an element $x$ repeated three times. In this case, we have that
$$P(x,x,x)=\prod_{i=1}^{n} (1-(x_ix_i+x_ix_i+x_ix_i)^{q-1})=\prod_{i=1}^{n} (1-(3x_ix_i)^{q-1})=1\not=0.$$
\endproof
As a corollary of Lemmas \ref{Lemma1} and \ref{LemmaNostro} we have that:
\begin{cor}
$$|S|\leq 2(n+1)|S_1^{j}|= 2(n+1)srk(P|_{S_1^{j}\times S_1^{j}\times S_1^{j}}).$$
\end{cor}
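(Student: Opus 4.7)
The plan is to combine the two pieces already set up: the combinatorial reduction that relates $|S|$ to the largest support-class $|S_1^j|$, and the slice-rank identification given by Lemmas \ref{Lemma1} and \ref{LemmaNostro}.

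First I would establish the inequality $|S|\le 2(n+1)|S_1^j|$. Since every element of $S$ can be repeated at most $p-1=2$ times (otherwise a triple $g_1'=g_2'=g_3'$ would satisfy $e_{2}(g_1',g_2',g_3')=3(g_1')^2=0$ in characteristic $3$, contradicting the standing hypothesis on $S$), removing repetitions yields a set $S_1$ with $|S_1|\ge |S|/2$. Partitioning $S_1$ according to the number of zero coordinates into $n+1$ classes $S_1^0,\dots,S_1^n$ and applying pigeonhole, some class satisfies $|S_1^j|\ge |S_1|/(n+1)\ge |S|/(2(n+1))$, which rearranges to $|S|\le 2(n+1)|S_1^j|$.

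For the equality $|S_1^j|=\mathrm{srk}\bigl(P|_{S_1^j\times S_1^j\times S_1^j}\bigr)$, I would invoke Lemma \ref{LemmaNostro}, which certifies that $P$ restricted to $S_1^j\times S_1^j\times S_1^j$ is a function $A\times A\times A\to \mathbb{F}_q$ (with $A=S_1^j$) that vanishes off the diagonal and is nonzero on it. This is precisely the hypothesis of Lemma \ref{Lemma1}, which then gives $\mathrm{srk}(P|_{S_1^j\times S_1^j\times S_1^j})=|S_1^j|$. Chaining this equality with the inequality above yields the statement of the corollary.

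There is no real obstacle here, since both ingredients are already in place: the main step is merely to observe that the characteristic-$3$ hypothesis forbids multiplicities $\ge 3$ (so the factor $p-1=2$ in the first inequality is the correct one), and then to quote Lemmas \ref{Lemma1} and \ref{LemmaNostro} to convert the combinatorial bound into the slice-rank identity.
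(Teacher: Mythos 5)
Your proposal is correct and matches the paper's approach exactly: the inequality $|S|\le 2(n+1)|S_1^j|$ follows from the removal-of-repetitions bound $|S_1|\ge |S|/2$ together with pigeonhole over the $n+1$ support-size classes, and the equality $|S_1^j|=\mathrm{srk}(P|_{S_1^j\times S_1^j\times S_1^j})$ is precisely the conjunction of Lemma \ref{LemmaNostro} (diagonal-only nonvanishing) with Lemma \ref{Lemma1}.
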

Now the goal is to upper bound the $srk(P|_{S_1^{j}\times S_1^{j}\times S_1^{j}})$. The following Lemma will help us to make the first step in this direction.
\begin{lem}[\cite{Blog1}]
Let $A$ be a finite set, $A_1\subseteq A$ and $\mathbb{F}$ be a field. Let $T(x,y,z)$ be a function $A\times A\times A\rightarrow \mathbb{F}$. Then
$$ srk(T|_{A_1 \times A_1 \times A_1})\leq srk(T).$$
\end{lem}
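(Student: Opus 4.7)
The plan is to exploit the defining property of slice rank directly: slice rank is defined as the minimum number of summands in a decomposition into slices, and restriction behaves well with respect to such decompositions.

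First I would set $r = srk(T)$ and fix an optimal decomposition
$$T(x_1, x_2, x_3) = \sum_{j=1}^{r} T^{(j)}(x_1, x_2, x_3),$$
where each summand $T^{(j)}$ is a slice, i.e., there exists an index $i_j \in \{1,2,3\}$ and functions $T_1^{(j)} : A \to \mathbb{F}$, $T_2^{(j)} : A \times A \to \mathbb{F}$ such that
$$T^{(j)}(x_1,x_2,x_3) = T_1^{(j)}(x_{i_j})\, T_2^{(j)}(x_1,\ldots,\widehat{x_{i_j}},\ldots,x_3).$$

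Next I would restrict every function appearing in the decomposition to the subset $A_1$. Concretely, I would define $\widetilde{T}_1^{(j)} : A_1 \to \mathbb{F}$ and $\widetilde{T}_2^{(j)} : A_1 \times A_1 \to \mathbb{F}$ as the restrictions of $T_1^{(j)}$ and $T_2^{(j)}$. The key observation is that
$$\widetilde{T}^{(j)}(x_1,x_2,x_3) := \widetilde{T}_1^{(j)}(x_{i_j})\, \widetilde{T}_2^{(j)}(x_1,\ldots,\widehat{x_{i_j}},\ldots,x_3)$$
is again a slice, now as a function $A_1 \times A_1 \times A_1 \to \mathbb{F}$, since restricting the domain of a product does not destroy the product structure.

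Finally, since for all $(x_1,x_2,x_3) \in A_1 \times A_1 \times A_1$ we have
$$T|_{A_1 \times A_1 \times A_1}(x_1,x_2,x_3) = \sum_{j=1}^{r} \widetilde{T}^{(j)}(x_1,x_2,x_3),$$
the restriction is expressed as a sum of $r$ slices on $A_1^3$, so $srk(T|_{A_1 \times A_1 \times A_1}) \leq r = srk(T)$. There is no real obstacle here: the lemma is essentially a bookkeeping statement, and the only thing to check is that the slice property is preserved under domain restriction, which is immediate from the definition.
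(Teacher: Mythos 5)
Your proof is correct and is exactly the standard argument for this lemma: the paper itself does not reproduce a proof (it cites Tao's notes \cite{Blog1}), and the argument given there is precisely yours, namely that restricting each slice in an optimal decomposition of $T$ to $A_1$ yields a slice on $A_1\times A_1\times A_1$, so the restricted decomposition witnesses $srk(T|_{A_1\times A_1\times A_1})\leq srk(T)$. The only cosmetic remark is that the definition allows a \emph{linear combination} of slices rather than a plain sum, but this changes nothing since scalars can be absorbed into the factors $T_1^{(j)}$.
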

We immediately get the following corollary:
\begin{cor}\label{UpperS}
Considering the function $P$ on the domain $\mathbb{F}_{q}^n\times \mathbb{F}_{q}^n\times \mathbb{F}_{q}^n$, we have that
$$|S|\leq 2(n+1)|S_1^{j}|= 2(n+1)srk(P).$$
\end{cor}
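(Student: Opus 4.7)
The statement is essentially a direct chaining of the preceding corollary with the lemma on restriction of slice rank, so my plan is just to spell out that chain rather than to introduce new machinery.

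First I would invoke the preceding lemma with the field $\mathbb{F}=\mathbb{F}_q$, the ambient finite set $A=\mathbb{F}_q^n$, the subset $A_1=S_1^{j}$, and the function $T=P$ defined in \eqref{eq:poleq}. The lemma immediately yields the inequality
$$
\mathrm{srk}\bigl(P\big|_{S_1^{j}\times S_1^{j}\times S_1^{j}}\bigr)\;\le\;\mathrm{srk}(P),
$$
where on the right-hand side $P$ is regarded as a function $\mathbb{F}_q^n\times\mathbb{F}_q^n\times\mathbb{F}_q^n\to\mathbb{F}_q$.

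Next I would recall the previous corollary, which combined Lemma \ref{Lemma1} and Lemma \ref{LemmaNostro} to give
$$
|S|\;\le\;2(n+1)\,|S_1^{j}|\;=\;2(n+1)\,\mathrm{srk}\bigl(P\big|_{S_1^{j}\times S_1^{j}\times S_1^{j}}\bigr).
$$
Substituting the bound from the first step into this identity produces
$$
|S|\;\le\;2(n+1)\,|S_1^{j}|\;\le\;2(n+1)\,\mathrm{srk}(P),
$$
which is exactly the claim (the middle equality in the statement is the one already established in the previous corollary).

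There is no real obstacle here: the work has been done by Lemma \ref{Lemma1}, Lemma \ref{LemmaNostro}, and the restriction lemma just stated; the corollary only repackages them. The genuinely hard step, which lies ahead, is to produce a nontrivial upper bound on $\mathrm{srk}(P)$ viewed on the full domain $\mathbb{F}_q^n\times\mathbb{F}_q^n\times\mathbb{F}_q^n$; that is where the convex optimization problem mentioned in the introduction will enter, and it is where the quantitative improvement over Theorem \ref{thm:generalBound} for $q=3^k$ with $k=2,3,4,5$ must come from.
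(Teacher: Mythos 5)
Your proof is correct and is exactly the chain the paper intends (the paper states the corollary as an immediate consequence of the preceding corollary and the restriction lemma, without spelling it out). In fact your version is slightly more careful: the final relation should indeed be an inequality $2(n+1)|S_1^{j}|\leq 2(n+1)\,\mathrm{srk}(P)$, since the restriction lemma only gives $\mathrm{srk}(P|_{S_1^{j}\times S_1^{j}\times S_1^{j}})\leq \mathrm{srk}(P)$; the equals sign in the paper's statement of Corollary~\ref{UpperS} is a small typo carried over from the preceding corollary.
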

Now we aim to prove that $srk(P)$ improves the bound given in Theorem \ref{thm:generalBound}.  For this purpose, we recall the asymptotic rank theory studied by Tao and Sawin in \cite{Blog2} in the special case of polynomial function (we do not need to consider the very general case of tensor Slice Rank).

Given a polynomial $p(x,y,z)$ whose degree in each variables is at most $\delta$, we define $\Gamma$ as the subset of $\{0,1,\dots,\delta\}^3$ of the triples $(d_1,d_2,d_3)$ such that $x^{d_1}y^{d_2}z^{d_3}$ has a nonzero coefficient in $p$. Hence we state the following proposition derived from \cite{Blog2}.

\begin{prop}\label{Blog2prop}
Let $p(x,y,z)$ be a polynomial and let $\Gamma$ be its support. Then:
$$ srk(\prod_{i=1}^n p(x_i,y_i,z_i))\leq \exp((H(\Gamma)+o(1))n)$$
where
$$H(\Gamma):=\sup_{(X_1,\dots,X_k)} \min(h(X_1),\dots,h(X_k)),$$
$(X_1,\dots,X_k)$ takes values in $\Gamma$ and $h(X)$ is the entropy of the random variable $X$ defined as $-\sum_{\gamma \in \Gamma'} \mathbb{P}[X = \gamma] \log (\mathbb{P}[X = \gamma])$ and $\Gamma'$ is the support of $X$.
\end{prop}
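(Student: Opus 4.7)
The plan is to expand $\prod_{i=1}^n p(x_i, y_i, z_i)$ into monomials, sort them by their \emph{type} in the information-theoretic sense, and then invoke subadditivity of the slice rank. This is exactly the Tao--Sawin strategy of \cite{Blog2}.

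First, writing $p(x,y,z) = \sum_{(d_1,d_2,d_3) \in \Gamma} a_{d_1,d_2,d_3}\, x^{d_1} y^{d_2} z^{d_3}$ and expanding the product yields
$$\prod_{i=1}^n p(x_i, y_i, z_i) = \sum_{\mathbf{d} \in \Gamma^n} c_{\mathbf{d}} \prod_{i=1}^n x_i^{d_{1,i}} y_i^{d_{2,i}} z_i^{d_{3,i}},$$
where $c_{\mathbf{d}} = \prod_i a_{d_{1,i}, d_{2,i}, d_{3,i}}$ and $\mathbf{d} = ((d_{1,i}, d_{2,i}, d_{3,i}))_{i=1}^n$. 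To each $\mathbf{d}$ I would attach its \emph{type} $\tau_{\mathbf{d}}$, the empirical distribution on $\Gamma$ given by $\tau_{\mathbf{d}}(\gamma) = \tfrac{1}{n}\#\{i : (d_{1,i}, d_{2,i}, d_{3,i}) = \gamma\}$. Since the number of $n$-sample empirical distributions on $\Gamma$ is at most $(n+1)^{|\Gamma|}$, the right-hand side splits into polynomially many pieces $F_\tau$, one per type.

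Next I would bound $srk(F_\tau)$ for each fixed type $\tau$, with marginals $\tau_1, \tau_2, \tau_3$ on the three coordinates of $\Gamma$. Every monomial in $F_\tau$ has an $x$-part $x^\alpha$ whose exponent vector $\alpha \in \{0, \ldots, \delta\}^n$ has type exactly $\tau_1$; by Stirling, the count of such $\alpha$ is the multinomial coefficient $\binom{n}{n \tau_1} = \exp(h(\tau_1)n + o(n))$. Writing $F_\tau = \sum_\alpha x^\alpha g_\alpha(y,z)$ exhibits $F_\tau$ as a sum of that many slices, so $srk(F_\tau) \leq \exp(h(\tau_1)n + o(n))$, and the symmetric arguments in $y$ and $z$ give
$$srk(F_\tau) \leq \exp\!\bigl(\min(h(\tau_1), h(\tau_2), h(\tau_3))\, n + o(n)\bigr).$$
Summing over the polynomially many types and using subadditivity of the slice rank yields a total bound of $\exp(M_n(\Gamma)\, n + o(n))$, where $M_n(\Gamma) := \max_\tau \min_j h(\tau_j)$ with the maximum ranging over $n$-empirical distributions $\tau$ on $\Gamma$.

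Finally, since the rational empirical distributions are dense in the simplex of probability distributions on $\Gamma$ and the entropy function is continuous, $M_n(\Gamma) \to H(\Gamma)$ as $n \to \infty$, which gives the claim. The main obstacles are purely bookkeeping: ensuring that the polynomial-in-$n$ number of types is absorbed by the $\exp(o(n))$ factor (it is, since $\log \mathrm{poly}(n) = o(n)$), controlling the $o(n)$ Stirling error uniformly over $\tau$, and verifying the density/continuity step that replaces the max by the sup. The only conceptual step is the observation that $h(\tau_j)$ is exactly the exponential growth rate of the number of length-$n$ exponent vectors of type $\tau_j$, which is what converts a combinatorial count into a slice-rank estimate.
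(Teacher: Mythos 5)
The paper does not prove this proposition; it simply states it as ``derived from \cite{Blog2}'' (the Tao--Sawin notes) and moves on. Your argument is a correct reconstruction of exactly the Tao--Sawin strategy being invoked: expand into monomials, stratify by empirical type, bound the slice rank of each type class by $\exp(n\min_j h(\tau_j)+o(n))$ via the multinomial/Stirling count of exponent vectors in the cheapest coordinate, and absorb the polynomially many types into the $\exp(o(n))$ factor. One small economy you could make: for the upper bound you only need $M_n(\Gamma)\leq H(\Gamma)$, which is immediate since $n$-empirical distributions form a subset of all distributions on $\Gamma$; the density/continuity step showing $M_n(\Gamma)\to H(\Gamma)$ is not required for the inequality in the stated direction, though it is not wrong. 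Otherwise this is sound and matches the cited source's approach.
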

In our case,  we will not find the exact value of $H(\Gamma)$ but we will compute it numerically when $k=2,3,4,5$ solving a convex optimization problem and providing then an upper bound of type $\exp(H(\Gamma))^{(n+o(n))}$ where $\exp(H(\Gamma))$ is strictly smaller than the bounds given in Theorem \ref{thm:generalBound}. For this purpose, we will recall the following theorem from \cite{Bachelor}.
\begin{thm}[Theorem 8 of \cite{Bachelor}]\label{Bachelor}
Let $\Gamma$ be a finite subset of $S \times S \times S$ for some set $S$ and let $\sigma\in Sym(3)$ be a permutation such that for each $a = (a_1,a_2,a_3) \in \Gamma$ also $\sigma(a)=(a_{\sigma(1)},a_{\sigma(2)},a_{\sigma(3)})\in \Gamma$. Then there is a random variable $Y$ taking values in $\Gamma$ such that for all $y\in \Gamma$ we have that $\mathbb{P}[Y=y] = \mathbb{P}[Y=\sigma(y)]$ and
$$ \min (h(Y_1),h(Y_2),h(Y_3))=H(\Gamma).$$
\end{thm}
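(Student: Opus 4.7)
The plan is to start with any maximizer of the defining supremum of $H(\Gamma)$ and symmetrize its law over the cyclic subgroup $G=\langle\sigma\rangle\leq\mathrm{Sym}(3)$; Shannon entropy is concave in the distribution, so the symmetrization cannot decrease $\min_i h(Y_i)$, which will yield both the invariance and the optimality claims simultaneously.

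First I would observe that the supremum in the definition of $H(\Gamma)$ is attained. Indeed, probability measures on the finite set $\Gamma$ form a compact simplex in $\mathbb{R}^{|\Gamma|}$, and the functional $p\mapsto\min_i h(X_i)$, where $X$ has law $p$, is continuous (each marginal entropy is a continuous function of $p$). Fix then a maximizer $X=(X_1,X_2,X_3)$ with distribution $p$, so that $\min_i h(X_i)=H(\Gamma)$.

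Next, since $\Gamma$ is $\sigma$-invariant by hypothesis, it is invariant under every element of $G$ with the action $g\cdot(a_1,a_2,a_3)=(a_{g(1)},a_{g(2)},a_{g(3)})$. Define $Y$ to be the $\Gamma$-valued random variable with distribution
$$q := \frac{1}{|G|}\sum_{g\in G} g_*p.$$
By construction $q$ is $G$-invariant, so in particular $\mathbb{P}[Y=y]=\mathbb{P}[Y=\sigma(y)]$ for every $y\in\Gamma$, settling the first conclusion of the theorem.

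It remains to establish $\min_i h(Y_i)\geq H(\Gamma)$, the reverse inequality being immediate since $Y$ takes values in $\Gamma$. A direct computation from the definition of the action shows that the $i$-th marginal is
$$\mathrm{law}(Y_i) \;=\; \frac{1}{|G|}\sum_{g\in G}\mathrm{law}(X_{g(i)}),$$
so by concavity of Shannon entropy (Jensen's inequality)
$$h(Y_i)\;\geq\; \frac{1}{|G|}\sum_{g\in G} h(X_{g(i)})\;\geq\; \min_{j\in\{1,2,3\}} h(X_j) \;=\; H(\Gamma),$$
using that an average is at least the minimum. Taking $\min_i$ on both sides closes the argument. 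No real obstacle is anticipated: the proof rests entirely on the concavity of entropy, and the only care needed is bookkeeping of the $\mathrm{Sym}(3)$-action so that the marginal of the symmetrized law is correctly identified as a mixture of the coordinate laws of $X$.
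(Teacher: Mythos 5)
Your proof is correct: the supremum is attained by compactness and continuity on the simplex of distributions on the finite set $\Gamma$, and averaging a maximizing law over $\langle\sigma\rangle$ preserves the values-in-$\Gamma$ constraint, yields the required $\sigma$-invariance, and by concavity of entropy applied to each mixed marginal gives $\min_i h(Y_i)\geq H(\Gamma)$, with the reverse inequality trivial. The paper does not prove this statement but imports it from Borst's thesis, and your symmetrization-plus-Jensen argument is essentially the standard proof given there, so nothing further is needed.
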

This theorem essentially ensures us that the value $H(\Gamma)$ is attained as a minimum of the entropy of the marginal variables of some random variable $Y$ and that this variable is invariant under permutations that fix $\Gamma$. We are now ready to state the following theorem.
\begin{thm}\label{main}
Let $q=3^k$, then we have that
$$EGZ(3,\mathbb{F}_{q}^{n},2) \leq \begin{cases} 8.315^{n+o(n)} & \text{for } k=2 \\ 
21.802^{n+o(n)} &\text{for } k=3 \\
58.557^{n+o(n)} &\text{for } k=4 \\
159.812^{n+o(n)} &\text{for } k=5
 \end{cases}.$$
\end{thm}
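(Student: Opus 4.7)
The plan is to apply the Slice Rank method via Proposition \ref{Blog2prop} to the polynomial
$p(x,y,z) = 1-(xy+yz+zx)^{q-1}$
appearing in equation \eqref{eq:poleq}, so that $P(x,y,z)=\prod_{i=1}^{n}p(x_i,y_i,z_i)$, and then to reduce the computation of the resulting upper bound to a convex optimization problem which I solve numerically for $k=2,3,4,5$. Combining with Corollary \ref{UpperS} and absorbing the polynomial factor $2(n+1)$ into the sub-exponential $o(n)$ term in the exponent gives
$$EGZ(3,\mathbb{F}_{q}^{n},2)\;\leq\;|S|+1\;\leq\;2(n+1)\exp\bigl((H(\Gamma)+o(1))n\bigr)\;=\;\exp(H(\Gamma))^{\,n+o(n)},$$
so the four claimed numerical constants are simply $\exp(H(\Gamma))$ (rounded) for each of the four values of $k$.

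The first concrete step is to identify the support $\Gamma\subseteq\{0,1,\ldots,q-1\}^{3}$ of $p$. Expanding $(xy+yz+zx)^{q-1}$ by the multinomial theorem, a generic term has the shape $\binom{q-1}{a,b,c}x^{a+c}y^{a+b}z^{b+c}$ with $a+b+c=q-1$ and $a,b,c\geq 0$. Therefore $\Gamma$ is the union of $\{(0,0,0)\}$ (from the constant term $1$) with the triples $(d_1,d_2,d_3)$ of nonnegative integers satisfying $d_1+d_2+d_3=2(q-1)$, the triangle inequalities $d_i\leq d_j+d_k$, and $d_i\leq q-1$, such that the multinomial coefficient with parameters $a=(d_1+d_2-d_3)/2$, $b=(d_2+d_3-d_1)/2$, $c=(d_1+d_3-d_2)/2$ is nonzero in $\mathbb{F}_{q}$ — equivalently, by Lucas' theorem, nonzero modulo $3$.

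Next I would exploit symmetry. Since $p(x,y,z)$ is fully symmetric, $\Gamma$ is $\mathrm{Sym}(3)$-invariant, and Theorem \ref{Bachelor} then guarantees that the supremum in the definition of $H(\Gamma)$ is attained by a symmetric distribution $Y$ on $\Gamma$ whose three marginals coincide; for such $Y$ one has $H(\Gamma)=h(Y_1)$. Maximizing the concave Shannon entropy $h(Y_1)$ over the convex set of $\mathrm{Sym}(3)$-invariant probability distributions on the finite set $\Gamma$ is a finite-dimensional convex program, with one decision variable per $\mathrm{Sym}(3)$-orbit of $\Gamma$, a linear normalization constraint, and nonnegativity constraints.

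The main obstacle is the numerical side: enumerating $\Gamma$ correctly requires evaluating trinomial coefficients $\binom{q-1}{a,b,c}$ modulo $3$ for $q=9,27,81,243$ (easily handled by Lucas), and then solving the convex program to sufficient precision to certify the constants $8.315,\,21.802,\,58.557,\,159.812$. Since the program is convex and of moderate size (its dimension is the number of $\mathrm{Sym}(3)$-orbits of $\Gamma$, which grows only polynomially in $q$), any standard convex optimizer (or even the KKT conditions for entropy maximization subject to linear equalities, which produce a Gibbs-type distribution) delivers the four numerical values, completing the proof.
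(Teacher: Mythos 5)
Your proposal is correct and follows essentially the same route as the paper: take $p(x,y,z)=1-(xy+yz+zx)^{q-1}$, apply Proposition \ref{Blog2prop} and Corollary \ref{UpperS} so the bound becomes $\exp(H(\Gamma))^{n+o(n)}$, use the $\mathrm{Sym}(3)$-symmetry (Theorem \ref{Bachelor}) to reduce the computation of $H(\Gamma)$ to a convex entropy-maximization over symmetric distributions on $\Gamma$, and evaluate numerically for $q=9,27,81,243$. The only thing you add beyond what the paper records is the explicit (and correct) description of $\Gamma$ via the multinomial expansion and Lucas' theorem, which the paper leaves implicit in its appeal to computer calculation.
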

\proof
We set $p(x,y,z)=(1-(xy+yz+zx)^{q-1})$ and we consider the following polynomial defined in \eqref{eq:poleq}.
$$P(x,y,z)=\prod_{i=1}^n p(x_i,y_i,z_i).$$
Hence we can use Proposition \ref{Blog2prop} to evaluate $srk(P)$.  For $q = 9,27,81,243$,  we compute the support $\Gamma$ of $p$ and then using Theorem \ref{Bachelor} we have been able to compute $H(\Gamma)$ numerically for these cases.
\begin{table}[H]
\centering
\def\arraystretch{1.17}
\begin{tabular}{l|l|l|l|l}
$q$ & $9$ & $27$ & $81$ & $243$ \\
\hline
$H(\Gamma)$ & $2.118$ & $3.082$ & $4.07\,$ & $5.074$ \\
\hline
\end{tabular}
\end{table}

Hence the theorem follows by Corollary \ref{UpperS} and Proposition \ref{Blog2prop}.
\endproof

\begin{rem}
One can prove that $H(\Gamma) < \log q$ for every $q = 3^k \geq 9$, where $\Gamma$ is the support of the polynomial $p(x,y,z)$ defined in Theorem \ref{main}.  The reader can find a proof in a previous version of this paper \cite{CDV2}.
\end{rem}

\begin{rem}
We observe
that, in Theorem \ref{main}, for $q=3$ we obtain a weaker bound than for the other cases of $q$. Indeed, in this case
$$\Gamma=\{(0,0,0),(2,2,0),(0,2,2),(2,0,2),(2,1,1),(1,2,1),(1,1,2)\} $$
and one can easily check that defining $Y$ that has, neatly, the distribution
$$(1/4,1/12,1/12,1/12,1/6,1/6,1/6)$$
over $\Gamma$, $Y_1$, $Y_2$ and $Y_3$ have all uniform distributions.
It follows that, in this case, $H(\Gamma)=\log 3$ and hence our proof fails to provide a better upper bound for $q=3$.
\end{rem}

For the other values of $q$ (i.e. $q > 243$) we have not been able to explicitly evaluate $H(\Gamma)$ since it seems that there are too many variables for this problem to be treated even with the help of a computer. 
\section*{Acknowledgements}
We would like to thank the anonymous reviewer for the simple proof of Proposition \ref{bound} and for pointing out the procedure used in Theorem \ref{thm:generalBound}.
The first author was partially supported by INdAM--GNSAGA.

\end{document}